\newcolumntype{Y}{>{\centering\arraybackslash}X}
\let\leq\leqslant
\let\geq\geqslant
\let\setminus\smallsetminus
\let\epsi\varepsilon
\let\rho\varrho
\newcommand{\mcJ}{\mathcal{J}}
\newcommand{\mcK}{\mathcal{K}}
\newcommand{\brac}[1]{{\left(#1\right)}}
\newcommand{\sbrac}[1]{{\left[#1\right]}}
\newcommand{\set}[1]{\left\{#1\right\}}
\newcommand{\norm}[1]{{\left|#1\right|}}
\newcommand{\floor}[1]{{\left\lfloor #1 \right\rfloor}}
\newcommand{\ceil}[1]{{\left\lceil #1 \right\rceil}}
\newcommand{\Oh}[1]{O\brac{#1}}
\newcommand{\oh}[1]{o\brac{#1}}
\newcommand\ie{i.e\@ifnextchar.{}{.\@}}
\newcommand\etc{etc\@ifnextchar.{}{.\@}}
\newcommand\etal{et~al\@ifnextchar.{}{.\@}}
\newtheorem{theorem}{Theorem}
\begin{document}

\thispagestyle{empty}

\title{Lower Bounds for On-line Interval Coloring with Vector and Cardinality Constraints}

\author[G.~Gutowski]{Grzegorz Gutowski}
\author[P.~Mikos]{Patryk Mikos}

\address{
Theoretical Computer Science Department,%
\\Faculty of Mathematics and Computer Science,%
\\Jagiellonian University, Krak\'ow, Poland%
}
\email{\{gutowski,mikos\}@tcs.uj.edu.pl}

\begin{abstract}

We propose two strategies for Presenter in the on-line interval graph coloring games.
Specifically, we consider a setting in which each interval is associated with a $d$-dimensional vector of weights
  and the coloring needs to satisfy the $d$-dimensional bandwidth constraint, and the $k$-cardinality constraint.
Such a variant was first introduced by Epstein and Levy and it is a natural model for resource-aware task scheduling with $d$ different shared resources where at most $k$ tasks can be scheduled simultaneously on a single machine.

The first strategy forces any on-line interval coloring algorithm to use at least $\brac{5m-3}\frac{d}{\log d + 3}$ different colors on an $m\brac{\frac{d}{k} + \log{d} + 3}$-colorable set of intervals.
The second strategy forces any on-line interval coloring algorithm to use at least $\floor{\frac{5m}{2}}\frac{d}{\log d + 3}$ different colors on an $m\brac{\frac{d}{k} + \log{d} + 3}$-colorable set of unit intervals.

\keywords{on-line coloring, interval graphs, unit interval graphs}

\end{abstract}

\maketitle

\section{Introduction}

A \emph{proper coloring} of a graph $G$ is an assignment of colors to the vertices of the graph such that adjacent vertices receive distinct colors.
A \emph{$k$-bounded coloring} of $G$ is a proper coloring of $G$ such that the number of vertices that receive any single color is at most $k$.
For a graph $G$, let $\chi(G)$ denote the \emph{chromatic number} of $G$, that is the minimum number of colors in a proper coloring of $G$, and let $\omega(G)$ denote the \emph{clique number} of $G$, that is, the maximum size of a subset of vertices such that any two vertices in the subset are adjacent.

An \emph{on-line graph coloring game} is a two-person game, played by Presenter and Algorithm.
In each round Presenter introduces a new vertex of a graph with its adjacency status to all vertices presented earlier.
Algorithm assigns a color to the incoming vertex in such a way that the coloring of the presented graph is proper.
The color of the new vertex is assigned before Presenter introduces the next vertex and the assignment is irrevocable.
The goal of Algorithm is to minimize the number of different colors used during the game.
In the \emph{$k$-bounded} variant of the game, the coloring constructed by Algorithm needs to be a $k$-bounded coloring of the presented graph.

For an interval $I = \sbrac{l,r}$ on the real line, we say that $l$ is the left endpoint, and $r$ is the right endpoint of $I$.
The \emph{length} of interval $I$ is the difference between its right endpoint and its left endpoint.
A set of intervals on the real line represents a graph in the following way.
Each interval represents a vertex and any two vertices are joined by an edge whenever the corresponding intervals intersect.
A graph which admits such a representation is an \emph{interval graph}.

An \emph{on-line interval coloring game} is a two-person game, again played by Presenter and Algorithm.
In each round Presenter introduces a new interval on the real line.
Algorithm assigns a color to the incoming interval in such a way that the coloring of the presented interval graph is proper, \ie{} all intervals of the same color are pairwise disjoint.
The color of the new interval is assigned before Presenter introduces the next interval and the assignment is irrevocable.
The goal of Algorithm is to minimize the number of different colors used during the game.

We consider a few variants of the on-line interval coloring game.
In the \emph{unit} variant of the game, all intervals presented by Presenter are of length exactly 1.
In the \emph{$d$-dimensional} variant of the game, Presenter associates a $d$-dimensional vector of weights from $\sbrac{0,1}$ with each presented interval.
Moreover, the coloring constructed by Algorithm needs to satisfy a different condition.
The condition is that for each color $\gamma$ and any point $p$ on the real line, the sum of weights of intervals containing $p$ and colored $\gamma$ does not exceed 1 in any of the coordinates.
In the \emph{$k$-cardinality} variant of the game, the coloring constructed by Algorithm needs to satisfy that for each color $\gamma$ and any point $p$ on the real line, the number of intervals containing $p$ and colored $\gamma$ does not exceed $k$.

We are most interested in the \emph{on-line $(k,d)$ interval coloring}, a variant in which each interval has a $d$-dimensional vector of weights and the coloring must satisfy constraints of both $d$-dimensional and $k$-cardinality variant.
That is, for each color $\gamma$ and any point $p$, the number of intervals containing $p$ and colored $\gamma$ does not exceed $k$, and the sum of weights of those intervals does not exceed 1 in any coordinate.

In the context of various on-line coloring games, the measure of the quality of a strategy for Algorithm is given by competitive analysis.
A coloring strategy for Algorithm is \emph{$r$-competitive} if it uses at most $r \cdot c$ colors for any $c$-colorable graph, or set of intervals, presented by Presenter.
The \emph{competitive ratio} for a problem is the infimum of all values $r$ such that there exists an $r$-competitive strategy for Algorithm for this problem.
In this paper we give lower bounds on competitive ratios for different problems.
We obtain these results by presenting explicit strategies for Presenter that force any Algorithm strategy to use many colors while the presented graph, or set of intervals, is colorable with a smaller number of colors.


We say that a strategy for Presenter in an on-line coloring problem is \emph{transparent} if after each time Algorithm assigns a color to a vertex, or interval, Presenter colors the vertex with his own color and reveals that color to Algorithm.
The coloring constructed by Presenter must satisfy the same conditions as the coloring constructed by Algorithm.
The number of colors used by a transparent strategy for Presenter gives an upper bound on the minimum number of colors that can be used in a coloring.

\subsection{Previous work}

There is a simple strategy for Presenter in on-line graph coloring game that forces Algorithm to use any number of colors while the constructed graph is $2$-colorable.
Thus, the competitive ratio for this problem is unbounded.
Nevertheless, it is an interesting question what is the competitive ratio when the on-line game is played only for at most $n$ rounds.
Halld\'{o}rsson and Szegedy~\cite{HalldorssonS94} presented a transparent strategy for Presenter that forces Algorithm to use at least $2\frac{n}{\log{n}}\brac{1+\oh{1}}$ different colors in $n$ rounds of the game while the constructed graph is $\log{n}\brac{1+\oh{1}}$-colorable.
The best known upper bound of $\Oh{\frac{n}{\log^{*}{n}}}$ on the competitive ratio for the $n$-round on-line graph coloring problem was given by Lovasz, Saks and Trotter~\cite{LovaszST89}.

The competitive ratio for the on-line interval coloring problem was established by Kierstead and Trotter~\cite{KiersteadT81}.
They constructed a strategy for Algorithm that uses at most $3\omega - 2$ colors while the clique size of the constructed graph is $\omega$.
They also presented a matching lower bound -- a strategy for Presenter that forces Algorithm to use at least $3\omega - 2$ colors.
Unit variant of the on-line interval coloring problem was studied by Epstein and Levy~\cite{EpsteinL05_2}.
They presented a strategy for Presenter that forces Algorithm to use at least $\floor{\frac{3\omega}{2}}$ colors while the clique size of the constructed graph is $\omega$.
Moreover, they showed that First-Fit algorithm uses at most $2\omega-1$ colors.
Epstein and Levy~\cite{EpsteinL05} introduced many variants of the on-line interval coloring problem.
The best known lower bound on the competitive ratio for the on-line $(k,d)$ interval coloring is $3$ for small $k$ and $\frac{24}{7}$ for large $k$.
For unit variant of this problem the best known lower bound is $\frac{3}{2}$.

Halld\'{o}rsson and Szegedy ideas were adopted by Azar~\etal{}~\cite{AzarCIKS13} to show lower bounds on competitive ratio for \emph{on-line $d$-vector bin packing}.
This problem is equivalent to a variant of $d$-dimensional on-line interval coloring where all presented intervals are the interval $\sbrac{0,1}$ with different vectors of weights.
Their strategy for Presenter shows that the competitive ratio for the on-line $d$-dimensional unit interval coloring problem is at least $2\frac{d}{\log^2d}\brac{1+\oh{1}}$.

\subsection{Our results}

We generalize Halld\'{o}rsson and Szegedy~\cite{HalldorssonS94} strategy into the setting of the $k$-bounded coloring, and using the technique similar to the one by Azar~\etal{}~\cite{AzarCIKS13} we adopt it to the on-line $(k,d)$ interval coloring problem.
We present how to combine this technique with classical results by Kierstead and Trotter~\cite{KiersteadT81}, and by Epstein and Levy~\cite{EpsteinL05_2,EpsteinL05}
to obtain a new lower bound of $5\frac{d}{\log{d}\brac{\frac{d}{k}+\log{d}}}\brac{1+\oh{1}}$ on the competitive ratio for the on-line $(k,d)$ interval coloring, and a lower bound of $\frac{5}{2}\frac{d}{\log{d}\brac{\frac{d}{k}+\log{d}}}\brac{1+\oh{1}}$ for unit variant of this problem.

\section{Graph coloring}

\begin{theorem}\label{thm:general_graphs_k_cardinality}
  For every $n \geq 2$ and $k \in \mathbb{N}_{+}$, there is a transparent strategy for Presenter that forces Algorithm to use at least $2\frac{n}{\log{n}+3}$ different colors in the $n$-round, $k$-bounded on-line graph coloring game and uses $\frac{n}{k} + \log{n} + 3$ colors.
\end{theorem}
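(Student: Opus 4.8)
The plan is to reconstruct the Halld\'{o}rsson--Szegedy strategy as an explicit recursive construction, verify that its color-forcing guarantee survives verbatim under the $k$-bounded rule, and then refine Presenter's transparent coloring so that it becomes $k$-bounded at the cost of roughly $\frac{n}{k}$ extra colors. Concretely, I would build a family of strategies $\mathcal{S}_p$ indexed by a level parameter $p$, maintaining the invariants that $\mathcal{S}_p$ presents $v_p$ vertices, that Presenter's transparent coloring partitions them into $p$ independent sets (one ``level'' per color), and that Algorithm is forced to use at least $c_p$ distinct colors. The base case $\mathcal{S}_1$ is a single vertex, with $v_1 = 1$, $c_1 = 1$. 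The recursive step takes two disjoint copies of $\mathcal{S}_{p-1}$ and joins them by one additional independent level of $c_{p-1}$ vertices, whose adjacencies are chosen adaptively so as to force the two color sets that Algorithm used on the two copies to be disjoint; this yields the recursion $c_p = 2c_{p-1}$ and $v_p = 2v_{p-1} + c_{p-1}$, while raising Presenter's palette by exactly one color.

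Solving these recursions gives $c_p = 2^p$ and $v_p = 2^{p-1}\brac{p+2}$, so a construction with $p$ levels uses $n = 2^{p-1}\brac{p+2}$ vertices and forces $2^p = \frac{2n}{p+2}$ colors. Since $n \geq 2^p$ we have $p \leq \log n$ (base-two logarithms), hence $p + 2 \leq \log n + 2$, and the forced number of colors is at least $\frac{2n}{\log n + 3}$, while Presenter uses only $p \leq \log n + 3$ colors in the unbounded setting. For values of $n$ not exactly of the form $2^{p-1}\brac{p+2}$ I would run the construction for the largest admissible $p$ and distribute the remaining vertices by enlarging the levels; the slack in the ``$+3$'' term absorbs this rounding.

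Next I would observe that passing to the $k$-bounded game does not weaken the lower bound. The forcing argument uses only that Algorithm's coloring is proper, i.e.\ that adjacent vertices receive different colors; it never relies on Algorithm's freedom to reuse a color. A $k$-bounded proper coloring is in particular proper, so the set of legal responses for Algorithm in the $k$-bounded game is a subset of its responses in the ordinary game, and the same strategy $\mathcal{S}_p$ forces at least $\frac{2n}{\log n + 3}$ colors here as well. It then remains to make Presenter's transparent coloring $k$-bounded. Starting from the $p \leq \log n + 3$ independent levels, I would run a capacity-$k$ first-fit within each level: a vertex assigned to level $j$ joins the current open subclass of that level unless it already contains $k$ vertices, in which case a fresh subclass is opened. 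Each subclass is a subset of an independent set of size at most $k$, so the result is a proper $k$-bounded coloring produced on-line, as transparency requires; and a level holding $s_j$ vertices is split into $\ceil{s_j/k} \leq s_j/k + 1$ subclasses, for a total of at most $\sum_{j} \brac{s_j/k + 1} = \frac{n}{k} + p \leq \frac{n}{k} + \log n + 3$ colors.

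The main obstacle is the recursive joining step: forcing the two color sets used on the two copies of $\mathcal{S}_{p-1}$ to be disjoint while adding only a single independent level, and hence only one Presenter color, is exactly the delicate Halld\'{o}rsson--Szegedy gadget, and the bookkeeping that turns the asymptotic $2\frac{n}{\log n}\brac{1 + \oh{1}}$ into the clean bound $2\frac{n}{\log n + 3}$ --- selecting the base case, handling the rounding for general $n$, and verifying that the adaptive adjacencies are compatible with the $\chi = p$ guarantee --- is where the care is needed. By contrast, the two new ingredients are routine once this is in place: the forcing is insensitive to the $k$-bound because that bound only restricts Algorithm, and the $k$-bounded refinement of Presenter's palette is an independent post-processing of the levels that does not interact with the adaptive edge choices, which depend only on Algorithm's colors.
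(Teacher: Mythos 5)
The core of your proposal has a genuine gap: the recursive joining step, as described, cannot work, and it is not the Halld\'{o}rsson--Szegedy gadget. If the two copies of $\mathcal{S}_{p-1}$ are vertex-disjoint and non-adjacent (as they must be for Presenter's palette to grow by only one color), then Algorithm can simply \emph{mirror} on the second copy the coloring it used on the first, making the two color sets identical. No vertices presented afterwards can retroactively change colors that are already assigned, and an \emph{independent} joining level can always be colored monochromatically by Algorithm, since its vertices are pairwise non-adjacent. Hence the gadget forces at most $c_{p-1}+1$ colors, not $2c_{p-1}$, and the recursion $c_p = 2c_{p-1}$ collapses. Forcing disjointness of color sets between two parts genuinely requires adjacency between the parts (this is exactly what happens in the interval-coloring constructions later in the paper, where every interval of one block intersects every interval of another), and that adjacency necessarily increases the chromatic number --- which is the tension your recursion cannot resolve. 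Your proposal defers precisely this step to ``the delicate Halld\'{o}rsson--Szegedy gadget,'' but their mechanism is entirely different: there are no two copies and no doubling. The paper (following them) maintains a \emph{progress matrix} whose rows are Presenter's $b = \floor{\log n}+3$ color classes; each new vertex is made adjacent to all vertices whose Presenter-color lies outside a chosen small ``pattern'' ($\leq \floor{\frac{b}{2}}$ rows) that does not yet represent any Algorithm color class. Properness of Algorithm's coloring then forces every Algorithm color class to consist of vertices from at most $\floor{\frac{b}{2}}$ Presenter rows, one per row, so every Algorithm class has size at most $\floor{\frac{b}{2}}$, which yields the bound $2\frac{n}{\log n + 3}$ by counting. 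The forcing comes from bounding the size of each Algorithm class, not from disjointness between recursively built pieces.

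Two secondary points. First, your observation that the $k$-bound on Algorithm only shrinks its set of legal responses, so any lower bound forced in the proper-coloring game persists, is correct. Second, your capacity-$k$ splitting of Presenter's classes into subclasses of size at most $k$, costing $\frac{n}{k} + \log n + 3$ colors in total, is sound in spirit, and would even be compatible with the progress-matrix strategy (whose adjacency rule can be driven by the internal ``level'' coloring while the revealed transparent coloring uses the subclasses); the paper instead integrates this bookkeeping directly into the strategy by declaring a row \emph{depleted} once it holds $k$ vertices and replacing it with a fresh active row, keeping exactly $b$ active rows throughout. But these peripheral components do not repair the proposal, because the construction they are attached to does not force the claimed number of colors.
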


\begin{proof}
Let $b = \floor{\log n} + 3$.
The state of a $k$-bounded on-line graph coloring game is represented by a \emph{progress matrix} $M$.
Each cell $M[i,j]$ is either empty or it contains exactly one vertex.
At the beginning of the game, all cells are empty.
A vertex in $M[i,j]$ is colored by Algorithm with color $j$ and by Presenter with color $i$.
Each player can use a single color $\gamma$ to color at most $k$ vertices, so there are at most $k$ vertices in any column, and in any row of the progress matrix.
We say that a row with $k$ vertices is \emph{depleted}.
Presenter can no longer use colors corresponding to depleted rows.
Presenter maintains a set of exactly $b$ \emph{active} rows, denoted $\mathcal{A}$, that contains all nonempty non-depleted rows and additionally some empty rows ($\set{i: 1 \leq \norm{\cup_{j}M[i,j]} < k} \subset \mathcal{A}$ and $\norm{\mathcal{A}} = b$).
At the beginning of the game there are no depleted rows and $\mathcal{A} = \set{1,\ldots,b}$.
When some row becomes depleted then it is removed from $\mathcal{A}$ and a new empty row is added to $\mathcal{A}$.
A \emph{pattern} is a subset of rows.
We say that a pattern $p$ \emph{represents} a column $j$ if $\forall{i}: i \in p \iff M[i,j] \neq \emptyset$. 
A pattern $p$ is \emph{active} if it is a nonempty subset of $\mathcal{A}$ such that $\norm{p} \leq \floor{\frac{b}{2}}$.
An active pattern $p$ is \emph{present} in $M$ if at least one column of $M$ is represented by $p$.

\begin{table}[H]
  \caption{Example of a progress matrix after 15 rounds}
  \label{fig:Mdk}
  \begin{tabularx}{0.7\textwidth}{*{12}{|Y}}\hline

             & 1 & 2 & 3 & 4 & 5 & 6 & 7 & 8 & 9 & 10 & $\ldots$ \\ \hline

    $1$      & $v_{1}$  &          & $v_{3}$  & $v_{5}$  & $v_{7}$  &          &          &          &          &  & \\ \hline
    $2^{*}$  &          & $v_{2}$  &          &          &          & $v_{10}$ &          & $v_{12}$ &          &  & \\ \hline
    $3$      &          & $v_{15}$ & $v_{4}$  &          &          &          & $v_{9}$  & $v_{13}$ &          &  & \\ \hline
    $4^{*}$  & $v_{6}$  &          &          &          &          &          & $v_{11}$ &          &          &  & \\ \hline
    $5^{*}$  &          &          &          &          &          & $v_{8}$  &          &          & $v_{14}$ &  & \\ \hline
    $6^{*}$  &          &          &          &          &          &          &          &          &          &  & \\ \hline
    $7$      &          &          &          &          &          &          &          &          &          &  & \\ \hline
    $\ldots$ &          &          &          &          &          &          &          &          &          &  &
  \end{tabularx}
\end{table}

\autoref{fig:Mdk} shows a possible state of the progress matrix $M$ after 15 rounds of the 4-bounded on-line graph coloring game with $n=4$ and $b=4$.
In this example, the last introduced vertex $v_{15}$ is colored by Algorithm with color 2 and by Presenter with color 3.
Rows 1 and 3 are depleted and the set of active rows is $\mathcal{A} = \set{2,4,5,6}$.
There are 10 different active patterns, but only 2 of them are present in $M$: pattern $\set{2,5}$ in column 6, and pattern $\set{5}$ in column 9.

The transparent strategy for Presenter for round $t$ is as follows:
\begin{enumerate}
  \item\label{rul:patt} Choose an active pattern $p_t$ that is not present in $M$.
  \item\label{rul:neib} Introduce a new vertex $v_t$ that is adjacent to all vertices colored by Presenter with colors not in $p_t$.
  \item Algorithm colors $v_t$ with color $\gamma$.
  \item\label{rul:resp} Color $v_t$ with any color $\rho$ such that $\rho \in p_t$ and $M[\rho,\gamma] = \emptyset$.
\end{enumerate}

We claim that Presenter can follow this strategy as long as there is an active pattern not present in the progress matrix.
To prove that, we need to show that in step~\eqref{rul:resp} Presenter always can choose an appropriate color $\rho$, and that the coloring constructed by Presenter is a $k$-bounded coloring of the constructed graph.

Let $q$ be a pattern that represents column $M[*,\gamma]$ of the progress matrix before round $t$.
We claim that $q \subsetneq p_t$.
Assume to the contrary that $i \in q \setminus p_t$.
It follows, that there is a vertex $v$ in cell $M[i,\gamma]$ and by rule~\eqref{rul:neib} $v$ is adjacent to $v_t$.
Thus, Algorithm cannot color vertex $v_t$ with color $\gamma$.
Pattern $q$ is present in $M$ before round $t$ and by rule~\eqref{rul:patt} pattern $p_t$ is not present in $M$ before round $t$.
It follows that $q$ is a strict subset of $p_t$ and Presenter has at least one choice for color $\rho$ in step~\eqref{rul:resp}.

When Presenter assigns color $\rho$ to vertex $v_t$, we have that $\rho \in p_t$; $p_t$ is an active pattern; $\rho$ is an active row, and there are less than $k$ vertices colored by Presenter with $\rho$.
Rule~\eqref{rul:neib} asserts that none of the vertices adjacent to $v_t$ is colored with any of the colors in $p_t$.
Thus, we have that Presenter can follow the strategy as long as there is a choice of an appropriate pattern in step~\eqref{rul:patt}. 

We claim that the game can be played for at least $n$ rounds.
Indeed, there are $\binom{b}{x}$ different patterns of size $x$ and each one of them must represent a column of the progress matrix with exactly $x$ vertices.
Thus, when all active patterns represent some column of the progress matrix, the game has been played for at least $\sum_{1 \leq x \leq \floor{\frac{b}{2}}} x\binom{b}{x} \geq n$ rounds.

After $n$ rounds, Presenter used colors corresponding to depleted and active rows.
There are at most $\floor{\frac{n}{k}}$ depleted rows and exactly $\floor{\log n} + 3$ active rows.
Thus, Presenter uses at most $\frac{n}{k} + \log n + 3$ colors in the first $n$ rounds.

Let $q_j$ be a pattern representing column $M[*,j]$ after $n$ rounds.
Let $t$ be the last round when a vertex was added to column $j$.
We have that $q_j$ is a subset of pattern $p_t$ which was an active pattern before round $t$, and the size of $q_j$ is at most $\floor{\frac{b}{2}}$.
Thus, there are at least $2\frac{n}{\log{n}+3}$ nonempty columns after $n$ rounds.

\end{proof}

For fixed parameters $n$ and $k$, denote a generalized Halld\'{o}rsson and Szegedy strategy by $HS_{k,n}$.
Note that for $k = +\infty$, there are no depleted rows in matrix $M$ and $k$-bounded coloring is simply a proper coloring.
In this case we get the original Halld\'{o}rsson and Szegedy result for the on-line graph coloring problem.

\begin{theorem}[Halld\'{o}rsson, Szegedy~\cite{HalldorssonS94}]\label{thm:general_graphs}
For every integer $n \geq 2$, there is a transparent strategy for Presenter that forces Algorithm to use at least $2\frac{n}{\log{n}+3}$ colors in the $n$-round on-line graph coloring game and uses $\log{n}+3$ colors.
\end{theorem}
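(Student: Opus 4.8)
The plan is to obtain this theorem as the special case $k = +\infty$ of Theorem~\ref{thm:general_graphs_k_cardinality}, exactly as suggested by the preceding remark. First I would observe that when $k = +\infty$ the $k$-bounded coloring constraint is vacuous: no color is ever used too many times, so a coloring is $k$-bounded if and only if it is proper, and the $k$-bounded on-line graph coloring game coincides with the ordinary on-line graph coloring game. Hence the strategy $HS_{k,n}$ with $k = +\infty$ is a legitimate transparent strategy for Presenter in the unconstrained game, and it suffices to reread the proof of Theorem~\ref{thm:general_graphs_k_cardinality} under this specialization.

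Next I would verify that every step of the strategy and of its correctness proof survives. With $k = +\infty$ no row of the progress matrix ever accumulates $k$ vertices, so no row is ever depleted and the set of active rows remains $\mathcal{A} = \set{1,\ldots,b}$ with $b = \floor{\log n} + 3$ throughout the game. The argument that Presenter can always pick a response color $\rho$ in step~\eqref{rul:resp} used only that the pattern $q$ representing the chosen column satisfies $q \subsetneq p_t$; this reasoning is independent of $k$ and goes through verbatim. Likewise, the counting bound $\sum_{1 \leq x \leq \floor{\frac{b}{2}}} x\binom{b}{x} \geq n$, which guarantees that the game lasts at least $n$ rounds, makes no reference to $k$ and carries over unchanged.

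Finally I would read off the two quantitative conclusions. The number of colors Presenter uses is the number of rows he touches; since the term $\floor{\frac{n}{k}}$ counting depleted rows now vanishes, this is exactly the $b = \floor{\log n} + 3 \leq \log n + 3$ active rows. The number of colors forced on Algorithm equals the number of nonempty columns, and the same bound $\norm{q_j} \leq \floor{\frac{b}{2}}$ on the size of a representing pattern yields at least $2\frac{n}{\log n + 3}$ of them. There is essentially no new content beyond the specialization, so the only point needing any care---and the closest thing to an obstacle---is confirming that $k = +\infty$ is a genuine, well-defined instance of the construction rather than a degenerate limit: this is immediate once one notes that $k$ enters the strategy solely through the maintenance of the active set $\mathcal{A}$, which remains constant and valid when depletion never occurs.
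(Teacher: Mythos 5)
Your proposal is correct and matches the paper's own treatment: the paper derives this theorem exactly by specializing the strategy $HS_{k,n}$ of Theorem~\ref{thm:general_graphs_k_cardinality} to $k=+\infty$, noting that no row is ever depleted and that a $k$-bounded coloring is then simply a proper coloring, so Presenter uses only the $\floor{\log n}+3$ active rows' colors while Algorithm is forced to use $2\frac{n}{\log n + 3}$ colors.
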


\section{Interval coloring}

In the proof of the following theorem, we use strategy $HS_{k,d}$ to show a lower bound on the competitive ratio for the on-line $(k,d)$ interval coloring problem.

\begin{theorem}\label{thm:kd_lower_general}
For every $d \geq 2$ and $k,m \in \mathbb{N}_{+}$, there is a strategy for Presenter that forces Algorithm to use at least $\brac{5m-3}\frac{d}{\log{d}+3}$ different colors in the on-line $(k,d)$ interval coloring game while the constructed set of intervals is $m\brac{\frac{d}{k} + \log{d} + 3}$-colorable.
\end{theorem}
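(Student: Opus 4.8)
The plan is to transport the combinatorial strategy $HS_{k,d}$ (\ie{} \autoref{thm:general_graphs_k_cardinality} with $n=d$) into the geometric $(k,d)$ setting as a self-contained \emph{gadget}, and then to amplify it by stacking $m$ such gadgets along the line in the style of the Kierstead--Trotter~\cite{KiersteadT81} on-line interval adversary. As a consistency check, the case $m=1$ should reproduce \autoref{thm:general_graphs_k_cardinality} verbatim: a single gadget forces $2\frac{d}{\log d + 3}$ colors while being $\brac{\frac{d}{k}+\log d + 3}$-colorable, and indeed $5m-3=2$ at $m=1$. So the whole theorem is an amplification of \autoref{thm:general_graphs_k_cardinality} by the interval geometry.

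First I would build the gadget. Its intervals are presented within a common small region, tight enough that any two of them overlap but with enough positional freedom to control how many pile up over a single point. I identify each Presenter color (row of the progress matrix) with a coordinate of the weight space and, following the vector-encoding idea of Azar~\etal{}~\cite{AzarCIKS13}, assign to the interval occupying cell $M[\rho,\gamma]$ a weight vector determined by its pattern, so that the $d$-dimensional bandwidth constraint separates two intervals whenever the corresponding $HS_{k,d}$ vertices are adjacent, while two intervals of a common Presenter color are never separated by it (so that the $k$-cardinality constraint may legally group up to $k$ of them). Since the resulting incompatibility relation then contains every edge of the simulated run of $HS_{k,d}$ and keeps each Presenter-color class independent, \autoref{thm:general_graphs_k_cardinality} still forces Algorithm to use $2\frac{d}{\log d + 3}$ colors on the gadget, while Presenter's row-coloring certifies that it is $\brac{\frac{d}{k}+\log d + 3}$-colorable.

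Next I would chain the gadgets. I place $G_1,\dots,G_m$ from left to right so that each $G_i$ overlaps the region already occupied by $G_{i-1}$ in the Kierstead--Trotter staircase pattern; this prevents Algorithm from recycling on $G_i$ any color it was forced to spend on the earlier gadgets, so each new gadget injects a fresh block of forced colors. Carrying the Kierstead--Trotter counting through with single intervals replaced by gadgets, the first gadget contributes $2\frac{d}{\log d + 3}$ and every subsequent one contributes $5\frac{d}{\log d + 3}$ new colors, for the claimed total of $\brac{5m-3}\frac{d}{\log d + 3}$, the surplus over the bare $HS_{k,d}$ bound being exactly the Kierstead--Trotter interval coefficient. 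For the upper bound I would give each gadget a private palette of $\frac{d}{k}+\log d + 3$ colors; the staircase keeps the interaction between distinct palettes under control, yielding $m\brac{\frac{d}{k}+\log d + 3}$-colorability. For the unit variant one substitutes the Epstein--Levy~\cite{EpsteinL05_2} unit-interval adversary, which replaces the coefficient $5$ by $\tfrac52$.

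The main obstacle is the gadget encoding, and its hard part is the \emph{summation} nature of the bandwidth constraint rather than any single pairwise incompatibility. To force the edge $\rho_s\notin p_t$, some coordinate must carry near-unit total weight for that pair, which pushes blocked coordinates to be heavy; but several intervals of the same Presenter color, which Presenter must be free to put in one class, may all be heavy in a common coordinate, and their weights there must still sum to at most $1$ for the upper bound to hold. Choosing weight vectors (following the bit-based construction of Azar~\etal{}~\cite{AzarCIKS13}) that simultaneously force all the required incompatibilities and keep every per-point, per-class coordinate sum bounded by $1$, so that the bandwidth and cardinality constraints do not interfere, is the delicate heart of the argument. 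A secondary technical point is verifying that the Kierstead--Trotter (resp.\ Epstein--Levy) staircase counting survives the replacement of atomic intervals by gadgets, so that the per-phase increment is exactly $5\frac{d}{\log d+3}$ (resp.\ $\tfrac52\frac{d}{\log d+3}$) and the colorability remains $m\brac{\frac{d}{k}+\log d + 3}$.
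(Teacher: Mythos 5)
Your high-level plan (encode $HS_{k,d}$ with $n=d$ as a geometric gadget, then amplify it in the style of Kierstead--Trotter) is the same as the paper's, and your $m=1$ consistency check is correct; however, both halves of your proposal have genuine gaps. The first is your encoding. You identify coordinates with \emph{Presenter colors} (rows) and let the weight vector of the interval landing in cell $M[\rho,\gamma]$ be determined by its pattern. This cannot be made to work, for a causality reason: the weight vector must be fixed when the interval is presented, \emph{before} Algorithm answers and hence before the row $\rho$ is decided (so it cannot depend on $\rho$), and if it depends only on the pattern $p_t$ there is a contradiction. Concretely, the same active pattern $p$ can be chosen in two rounds $s_1<s_2$ (choosing $p$ at round $s_1$ need not make $p$ present afterwards), forcing $w_{s_1}=w_{s_2}$, while Presenter may answer with different rows $\rho_{s_1}\neq\rho_{s_2}$, both in $p$. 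A later round $t$ whose pattern contains $\rho_{s_1}$ but not $\rho_{s_2}$ then needs a bandwidth conflict with $I_{s_2}$ (adjacency, required for the lower bound to transfer) but must remain jointly feasible with $I_{s_1}$, since Presenter may put $v_t$ into row $\rho_{s_1}$ and same-row intervals must form a feasible class for the claimed $m\brac{\frac{d}{k}+\log d+3}$-colorability. Identical weight vectors cannot do both. The paper's encoding avoids exactly this by identifying coordinates with \emph{rounds}: $I_t$ is literally the whole region $\sbrac{L,R}$ with weight $\alpha=1-\frac{1}{2}\epsi$ in its own coordinate $t$, weight $\epsi$ in each coordinate $i<t$ with $v_i$ adjacent to $v_t$, and $\delta=\frac{\epsi}{2d}$ elsewhere; the adjacency-deciding weights sit on the \emph{later} interval, which already knows all earlier rows.

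The second gap is the amplification. (a) In the $(k,d)$ game, overlap alone does not prevent Algorithm from recycling colors across gadgets: overlapping intervals may share a color as long as weight sums stay at most $1$ and at most $k$ intervals meet per point. The paper forces cross-gadget conflicts by running the $i$-th gadget call with parameter $\epsi_i=\brac{\frac{1}{2d}}^{i}$, so that any interval of a later call (weight $\alpha_j$ in some coordinate) conflicts with \emph{every} overlapping interval of an earlier, finished call. (b) Even granting fresh colors per gadget, the staircase you describe yields only $2m\frac{d}{\log d+3}$ colors, since a single gadget forces only $2\frac{d}{\log d+3}$; your assertion that each subsequent gadget "contributes $5\frac{d}{\log d+3}$ new colors" has no mechanism behind it, and $5m-3>2m$ for $m\geq 2$. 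The paper's per-level increment $\frac{5}{2}c_1$ comes from a genuinely recursive, adaptive construction: at level $m+1$, Presenter plays $\bar c=3\binom{c_{m+1}}{c_m}+1$ independent copies of the level-$m$ strategy in disjoint regions, pigeonholes four regions whose Algorithm color sets contain a common $c_m$-element set $C^*$, and then makes three more gadget calls placed according to Algorithm's answers (end blocks $\mcK_1,\mcK_2$, then either a middle block $\mcK_3$ or blocks $\mcK_4,\mcK_5$ depending on whether $\norm{D_1\cap D_2}\leq\frac{1}{2}c_1$), forcing $c_m+c_1+\frac{1}{2}c_1+c_1=c_{m+1}$ colors while each $\mcJ_i$ meets at most one $\mcK_j$, so colorability grows by only one gadget's worth. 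Without this pigeonhole-and-case-analysis step, the bound $\brac{5m-3}\frac{d}{\log d+3}$ is out of reach of your construction.
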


\begin{proof}
For any fixed parameters $k \in \mathbb{N}_{+}, d \geq 2$, $L < R$, $\epsi \in (0, \frac{1}{d})$ we describe an auxiliary strategy $HS_{k,d}(\epsi, L, R)$.
Let $\alpha = 1 - \frac{1}{2}\epsi$, $\delta = \frac{1}{2d}\epsi$.
In the $t$-th round of the on-line $(k,d)$ interval coloring game, Presenter uses $HS_{k,d}$ strategy to get a new vertex $v_t$.
Then, presents an interval $\sbrac{L,R}$ with weights $w_t$, where $w_t = \brac{x_1,\ldots,x_d}$ is a $d$-dimensional vector with $x_t=\alpha$, $x_i=\epsi$ for all $i < t$ such that $v_i$ is adjacent to $v_t$, and $x_i=\delta$ in every other coordinate.
Figure~\ref{fig:interval_encoding} shows an example of $w_{6}$ for a vertex $v_6$ that is adjacent to $v_2$ and $v_5$.

\begin{figure}[H] \setlength{\unitlength}{0.09in}
\centering
\begin{picture}(30,14)

\put(0,1.5){\line(1,0){30}}
\put(0,1.5){\line(0,1){1}}
\put(3,1.5){\line(0,1){3}}
\put(6,1.5){\line(0,1){3}}
\put(9,1.5){\line(0,1){1}}
\put(12,1.5){\line(0,1){3}}
\put(15,1.5){\line(0,1){12}}
\put(18,1.5){\line(0,1){12}}
\put(21,1.5){\line(0,1){1}}
\put(24,1.5){\line(0,1){1}}
\put(27,1.5){\line(0,1){1}}
\put(30,1.5){\line(0,1){1}}

\put(0,2.5){\line(1,0){3}}
\put(3,4.5){\line(1,0){3}}
\put(6,2.5){\line(1,0){3}}
\put(9,2.5){\line(1,0){3}}
\put(12,4.5){\line(1,0){3}}
\put(15,13.5){\line(1,0){3}}
\put(18,2.5){\line(1,0){3}}
\put(21,2.5){\line(1,0){3}}
\put(27,2.5){\line(1,0){3}}

\put(1,0){1}
\put(4,0){2}
\put(7,0){3}
\put(10,0){4}
\put(13,0){5}
\put(16,0){6}
\put(19,0){7}
\put(22,0){8}
\put(24.5,0){$\ldots$}
\put(28,0){$d$}

\put(1,3){$\delta$}
\put(4,5){$\epsi$}
\put(7,3){$\delta$}
\put(10,3){$\delta$}
\put(13,5){$\epsi$}
\put(16,14){$\alpha$}
\put(19,3){$\delta$}
\put(22,3){$\delta$}
\put(28,3){$\delta$}

\end{picture} 
\caption{Encoding of $v_6$ in a $d$-dimensional vector of weights}
\label{fig:interval_encoding}
\end{figure}
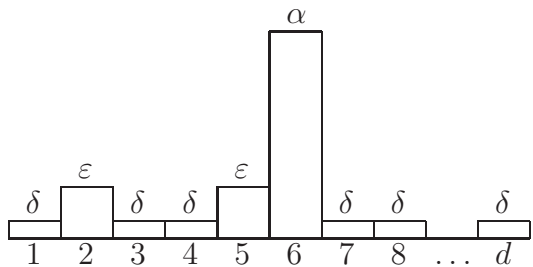

$\brac{\sbrac{L,R},w_t}$ is colored by Algorithm with color $\gamma_t$.
Then, $\gamma_t$ is forwarded to $HS_{k,d}$ as the color of $v_t$.
$HS_{k,d}$ colors $v_t$ with $\rho_t$, but Presenter discards that information.
See Figure~\ref{fig:hs_encoding_diagram} for a diagram of the strategy $HS_{k,d}(\epsi, L, R)$, and Figure~\ref{fig:hs_encoding_example} for an example encoding of a graph.

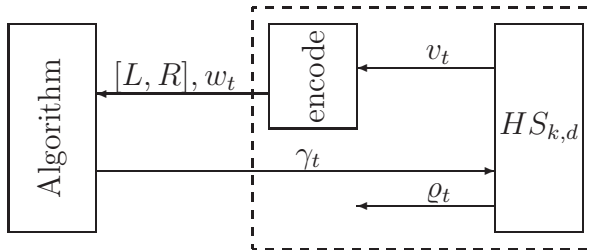
\begin{figure}[H] \setlength{\unitlength}{0.09in}
\centering
\begin{picture}(34,15) 

\put(0,2){\framebox(5,12){\rotatebox{90}{Algorithm}}}

\put(14,1){\dashbox{0.5}(20,14)}
\put(15,8){\framebox(5,6){\rotatebox{90}{encode}}}
\put(28,2){\framebox(5,12){$HS_{k,d}$}}

\put(24,12){$v_t$}
\put(28,11.5){\vector(-1,0){8}}

\put(6,10.5){$\sbrac{L,R}, w_t$}
\put(15,10){\vector(-1,0){10}}

\put(16.5,6){$\gamma_t$}
\put(5,5.5){\vector(1,0){23}}

\put(24,4){$\rho_t$}
\put(28,3.5){\vector(-1,0){8}}

\end{picture} 
\caption{Encoding of $HS_{k,d}$ strategy} 
\label{fig:hs_encoding_diagram}
\end{figure}

We claim that the encoding strategy ensures that any intervals $I_i$ and $I_j$ can get the same color iff vertices $v_i$ and $v_j$ are not adjacent.
First, assume that $i < j$ and $v_i$ is adjacent to $v_j$.
Vector $w_i$ has $\alpha$ in the $i$-th coordinate, vector $w_j$ has $\epsi$ in the $i$-th coordinate, and $\alpha + \epsi > 1$.
Thus, intervals $I_i$ and $I_j$ must be colored with different colors.
Let $\mcJ \subset \set{I_1,...,I_{t-1}}$ be the set of intervals colored with $\gamma$ before round $t \leq d$.
Assume that $v_t$ is not adjacent to any of the vertices in $\mcJ$.
Denote the $l$-th coordinate of the sum of vectors of weights of intervals in $\mcJ$ by $W_l$.
For any $1 \leq l \leq d$, if $I_l \in \mcJ$ then $W_l = \alpha + \delta\brac{\norm{\mcJ}-1} < 1 - \delta$.
In this case we have that $v_t$ is not adjacent to $v_l$, and that the $l$-th coordinate of the vector of weights of $v_t$ is $\delta$.
If $I_l \notin \mcJ$ then $W_l \leq \epsi\norm{\mcJ} < 1 - \epsi$.
For $l = t$, we have $W_l \leq \delta\norm{\mcJ} \leq 1 - \alpha$.
Thus, the sum of vector of weights of the intervals in the set $\mcJ \cup \set{I_t}$ does not exceed $1$ in any coordinate and $I_t$ can be colored with $\gamma$.

\begin{figure}[H] \setlength{\unitlength}{0.09in}
\centering
\begin{picture}(36,12) 
\put(0,4){\circle{0.8}}
\put(4,0){\circle{0.8}}
\put(8,4){\circle{0.8}}
\put(12,8){\circle{0.8}}
\put(16,0){\circle{0.8}}
\put(18,4){\circle{0.8}}

\put(4,0){\vector(-1,1){3.9}}
\put(8,4){\vector(-1,-1){3.9}}
\put(12,8){\vector(-3,-1){11.9}}
\put(12,8){\vector(-1,-1){3.9}}
\put(16,0){\vector(-1,0){11.9}}
\put(16,0){\vector(-2,1){7.9}}
\put(18,4){\vector(-1,-2){1.9}}
\put(18,4){\vector(-3,2){5.9}}

\put(0,5){$v_1$}
\put(3,1.2){$v_2$}
\put(7,5){$v_3$}
\put(12,9){$v_4$}
\put(14.5,1){$v_5$}
\put(18,5){$v_6$}
\put(25,5){
\begin{tabular}{| l || l | l | l | l | l | l |} \hline
$w_1$ & $\alpha$ & $\delta$ & $\delta$ & $\delta$ & $\delta$ & $\delta$ \\ \hline
$w_2$ & $\epsi$ & $\alpha$ & $\delta$ & $\delta$ & $\delta$ & $\delta$ \\ \hline
$w_3$ & $\delta$ & $\epsi$ & $\alpha$ & $\delta$ & $\delta$ & $\delta$ \\ \hline
$w_4$ & $\epsi$ & $\delta$ & $\epsi$ & $\alpha$ & $\delta$ & $\delta$ \\ \hline
$w_5$ & $\delta$ & $\epsi$ & $\epsi$ & $\delta$ & $\alpha$ & $\delta$ \\ \hline
$w_6$ & $\delta$ & $\delta$ & $\delta$ & $\epsi$ & $\epsi$ & $\alpha$ \\ \hline
\end{tabular}
}

\end{picture}

\caption{Example of a graph and vectors of weights corresponding to the vertices}
\label{fig:hs_encoding_example}
\end{figure}
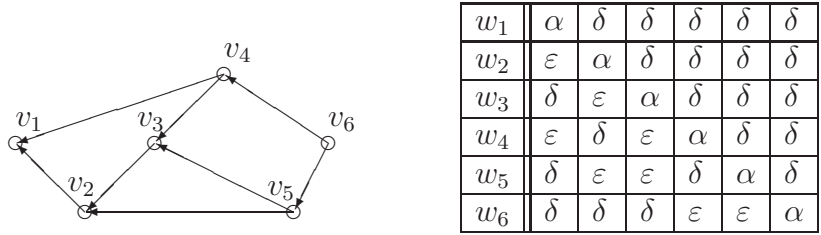

Consider a sequence of parameters $\set{\epsi_i}_{i \in \mathbb{N}_{+}}$ defined as $\epsi_i \coloneqq \left(\frac{1}{2d}\right)^{i}$.
See that for every $i \in \mathbb{N}_{+}$ we have $\epsi_i \in (0, \frac{1}{d})$ and we can use $HS_{k,d}(\epsi_i, L, R)$ strategy.
Let $\alpha_i = 1 - \frac{1}{2}\epsi_i$ and $\delta_i = \frac{1}{2d}\epsi_i = \epsi_{i+1}$.

Let $\mcJ_i$ be a set of intervals constructed by $HS_{k,d}(\epsi_i,L_i,R_i)$ strategy and $\mcJ_j$ be a set of intervals constructed by $HS_{k,d}(\epsi_j,L_j,R_j)$ strategy.
Assume that $i < j$, $[L_i, R_i] \cap [L_j, R_j] \neq \emptyset$ and that the construction of $\mcJ_i$ is finished before the construction of $\mcJ_j$ starts.
Any interval $I \in \mcJ_j$ has weight $\alpha_j$ in one of the coordinates and every interval in $\mcJ_i$ has weight either $\alpha_i, \epsi_i$ or $\delta_i$ in that coordinate.
In any case, sum of those weights exceeds $1$ and no two intervals, one in $\mcJ_i$, other in $\mcJ_j$ can be colored with the same color.

The rest of the proof uses a technique similar to the one by Kierstead and Trotter~\cite{KiersteadT81}.
For $m \in \mathbb{N}_{+}$, let $c_m = \brac{5m-3}\frac{d}{\log{d}+3}$, and $o_m = m\brac{\frac{d}{k} + \log{d} + 3}$.
By induction on $m$, we show a strategy $S_m$ for Presenter such that: all introduced intervals are contained in a fixed region $\sbrac{A,B}$; all intervals come from calls of strategies $HS_{k,d}(\epsi,L,R)$ with $\epsi$ in $\set{\epsi_1,\ldots,\epsi_{3m}}$; Algorithm uses at least $c_m$ different colors; constructed set of intervals is $o_m$-colorable.
For $m = 1$ and a fixed region $\sbrac{A,B}$, Presenter uses strategy $HS_{k,d}(\epsi_1,A,B)$.
This strategy forces Algorithm to use at least $c_1$ different colors, and the constructed set of intervals is $o_1$-colorable.
Thus, in this case we are done.

Let $\bar{c} = 3{c_{m+1} \choose c_{m}} + 1$.
Presenter splits the fixed region $\sbrac{A,B}$ into $\bar{c}$ disjoint regions $\sbrac{l_1,r_1},\ldots,\sbrac{l_{\bar{c}},r_{\bar{c}}}$.
By induction, in each region Presenter can use strategy $S_m$ independently.
As a result, in each region $\sbrac{l_i,r_i}$, we get a set of intervals $\mcJ_i$.
If during the construction Algorithm uses at least $c_{m+1}$ colors, we are done.
Otherwise, let $C_i$ be a $c_m$-element subset of colors used by Algorithm to color $\mcJ_i$.
Some $c_m$-element set of colors $C^*$ appears on the list $\brac{C_1,\ldots,C_{\bar{c}}}$ at least 4 times.
Let $a,b,c,d \in\set{1,\ldots,\bar{c}}$, $a < b < c < d$ be indices such that $C_a = C_b = C_c = C_d = C^*$. 
Define $p_i = \frac{1}{2}\brac{r_i + l_{i+1}}$ for $i=a,b,c$.

Presenter uses strategy $HS_{k,d}(\epsi_{3m+1},l_a,p_a)$ to get a set of intervals $\mcK_1$ and then strategy $HS_{k,d}(\epsi_{3m+1},\frac{r_c+p_c}{2},r_d)$ to get a set of intervals $\mcK_2$.
Let $D_1$ be a $c_1$-element subset of colors used by Algorithm to color $\mcK_1$, and $D_2$ be a $c_1$-element subset of colors used to color $\mcK_2$.
The construction of $\mcJ_a$ and $\mcJ_d$ is finished before the construction of $\mcK_1$ and $\mcK_2$ started and region $\sbrac{l_a,p_a}$ covers $\sbrac{l_a,r_a}$, and region $\sbrac{\frac{r_c+p_c}{2},r_d}$ covers $\sbrac{l_d,r_d}$.
Thus, none of the colors in $C^*$ can be used to color any interval in $\mcK_1$ or $\mcK_2$.
Now, the strategy splits into two cases: we either have $\norm{D_1 \cap D_2} \leq \frac{1}{2}c_1$ or $\norm{D_1 \cap D_2} > \frac{1}{2}c_1$.

\noindent \textbf{Case 1.} $\norm{D_1 \cap D_2} \leq \frac{1}{2}c_1$.
Presenter uses strategy $HS_{k,d}(\epsi_{3m+2},\frac{r_a+p_a}{2},p_c)$ to get a set of intervals $\mcK_3$.
Let $D_3$ be the set of colors used to color $\mcK_3$.
See Figure~\ref{fig:construct123} for a diagram of the construction.
Region $\sbrac{\frac{r_a+p_a}{2},p_c}$ covers $\sbrac{l_b,r_b}$ and we get $C^* \cap D_3 = \emptyset$.
Moreover, any interval in $\mcK_3$ intersects any interval in $\mcK_1$, and any interval in $\mcK_2$.
Thus, $D_3 \cap D_1 = \emptyset$, $D_3 \cap D_2 = \emptyset$, and Algorithm uses at least $\norm{C^* \cup D_1 \cup D_2 \cup D_3} \geq c_m + c_1 + \frac{1}{2}c_1 + c_1 = c_{m+1}$ colors.
Each set of intervals $\mcJ_1, \ldots, \mcJ_{\bar{c}}$ intersects with intervals in at most one of the sets $\mcK_1$, $\mcK_2$, or $\mcK_3$.
Thus, all presented intervals can be colored with $\max\{2,m+1\}(\frac{d}{k} + \log{d} +3) = o_{m+1}$ colors and in this case we are done.

\begin{figure}[H] \setlength{\unitlength}{0.09in}
\centering
\begin{picture}(36,10) 

\put(0,9) {$l_a$}
\put(4,9) {$r_a$}
\put(0,6){\framebox(5,2){$C_a$}}
\put(10,9) {$l_b$}
\put(14,9) {$r_b$}
\put(10,6){\framebox(5,2){$C_b$}} 
\put(20,9) {$l_c$}
\put(24,9) {$r_c$}
\put(20,6){\framebox(5,2){$C_c$}}
\put(30,9) {$l_d$}
\put(34,9) {$r_d$}
\put(30,6){\framebox(5,2){$C_d$}}

\put(7,9.5) {$p_a$}
\put(7,8.8){\vector(0,-1){3}}
\put(17,9.5) {$p_b$}
\put(17,8.8){\vector(0,-1){3}} 
\put(27,9.5) {$p_c$}
\put(27,8.8){\vector(0,-1){3}} 

\put(0,3){\framebox(7,2){$D_1$}}
\put(26,3){\framebox(9,2){$D_2$}}
\put(6,0){\framebox(21,2){$D_3$}}

\end{picture} 
\caption{Construction in case $\norm{D_1 \cap D_2} \leq \frac{1}{2}c_1$} 
\label{fig:construct123}
\end{figure}
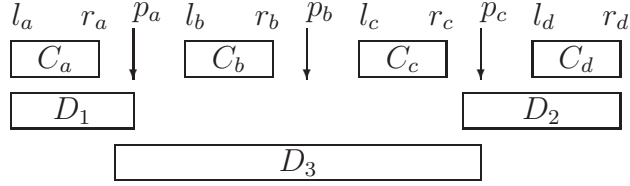

\noindent \textbf{Case 2.} $\norm{D_1 \cap D_2} > \frac{1}{2}c_1$.
Presenter uses strategy $HS_{k,d}(\epsi_{3m+2},\frac{r_b+p_b}{2},p_c)$ to get a set of intervals $\mcK_4$ and then strategy $HS_{k,d}(\epsi_{3m+3},\frac{r_a+p_a}{2},p_b)$ to get $\mcK_5$.
Let $D_4$ be a $c_1$-element subset of colors used by Algorithm to color $\mcK_4$, and $D_5$ be the set of colors used to color $\mcK_5$.
See Figure~\ref{fig:construct1245} for a diagram of the construction.
Similar argument as in the previous case gives $C^* \cap D_4 = \emptyset$, $C^* \cap D_5 = \emptyset$, $D_2 \cap D_4 = \emptyset$, $D_1 \cap D_5 = \emptyset$, and $D_4 \cap D_5 = \emptyset$.
Set $D_2$ contains at least $\frac{1}{2}c_1$ elements from the set $D_1$.
Thus, we have $\norm{D_4 \cap D_1} \leq \frac{1}{2}c_1$.
Algorithm uses at least $\norm{C^* \cup D_1 \cup D_4 \cup D_5} \geq c_m + c_1 + \frac{1}{2}c_1 + c_1 = c_{m+1}$ colors.
Each set of intervals $\mcJ_1, \ldots, \mcJ_{\bar{c}}$ intersects with intervals in at most one of the sets $\mcK_1$, $\mcK_2$, $\mcK_4$, or $\mcK_5$.
Thus, all intervals can be colored with $\max\{2,m+1\}(\frac{d}{k} + \log{d} +3) = o_{m+1}$ colors.

\begin{figure}[H] \setlength{\unitlength}{0.09in}
\centering
\begin{picture}(36,13) 

\put(0,12) {$l_a$}
\put(4,12) {$r_a$}
\put(0,9){\framebox(5,2){$C_a$}}
\put(10,12) {$l_b$}
\put(14,12) {$r_b$}
\put(10,9){\framebox(5,2){$C_b$}} 
\put(20,12) {$l_c$}
\put(24,12) {$r_c$}
\put(20,9){\framebox(5,2){$C_c$}}
\put(30,12) {$l_d$}
\put(34,12) {$r_d$}
\put(30,9){\framebox(5,2){$C_d$}}

\put(7,12.5) {$p_a$}
\put(7,11.8){\vector(0,-1){3}}
\put(17,12.5) {$p_b$}
\put(17,11.8){\vector(0,-1){3}} 
\put(27,12.5) {$p_c$}
\put(27,11.8){\vector(0,-1){3}} 

\put(0,6){\framebox(7,2){$D_1$}}
\put(26,6){\framebox(9,2){$D_2$}}
\put(16,3){\framebox(11,2){$D_4$}}
\put(6,0){\framebox(11,2){$D_5$}}

\end{picture} 
\caption{Construction in case $\norm{D_1 \cap D_2} \ge \frac{1}{2}c_1$} 
\label{fig:construct1245}
\end{figure}

\end{proof}


\section{Unit interval coloring}

\begin{theorem}\label{thm:kd_lower_unit}
For every $d \geq 2$ and $k,m \in \mathbb{N}_{+}$, there is a strategy for Presenter that forces Algorithm to use at least $\floor{\frac{5m}{2}}\frac{d}{\log{d}+3}$ different colors in the on-line $(k,d)$ unit interval coloring game while the constructed set of intervals is $m\brac{\frac{d}{k} + \log{d} + 3}$-colorable.
\end{theorem}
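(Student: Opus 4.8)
The plan is to mirror the proof of \autoref{thm:kd_lower_general}, keeping the encoding machinery intact but replacing the Kierstead--Trotter nesting in the outer induction by an Epstein--Levy style combination that never nests intervals. First I would reuse the gadget $HS_{k,d}(\epsi,L,R)$ exactly as before, but always invoked with $R=L+1$, so that every presented interval has length $1$. Since within a single call all intervals are the identical interval $\sbrac{L,L+1}$, they pairwise intersect and the weight-encoding argument goes through verbatim; in particular the two structural facts from \autoref{thm:kd_lower_general} survive: inside one call two intervals may share a color if and only if the corresponding $HS_{k,d}$ vertices are non-adjacent, and two calls made with distinct parameters $\epsi_i\neq\epsi_j$ whose intervals intersect must use disjoint color sets. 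The only change is that ``two regions intersect'' now means that the unit intervals overlap, i.e.\ their left endpoints lie within distance $1$.

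Next I would run the induction on $m$ with target $\floor{\frac{5m}{2}}\frac{d}{\log d+3}$ and the unchanged colorability bound $o_m=m\brac{\frac{d}{k}+\log d+3}$, based on the single call $HS_{k,d}(\epsi_1,A,A+1)$, which by \autoref{thm:general_graphs_k_cardinality} with $n=d$ forces $2\frac{d}{\log d+3}$ colors on an $o_1$-colorable set. For the inductive step I would again partition the working interval into many equal blocks, run the previous-level strategy independently in each, and use pigeonhole to locate several blocks colored with a common palette $C^\ast$. The essential difference is the combination gadget: in \autoref{thm:kd_lower_general} a single \emph{wide} interval $\mcK_3$ reaching from one extreme block to another meets the gadgets over both of them at once, which is what produces the full per-level increment $\frac{5}{2}c_1$, where $c_1=2\frac{d}{\log d+3}$ is the number of colors one call forces. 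With unit intervals no interval can reach across more than one block, so I would merge only \emph{adjacent} blocks, placing each new unit gadget (with a fresh parameter) so that it overlaps exactly one neighboring pair; rerunning the same $\norm{D_1\cap D_2}\leq\frac12 c_1$ versus $\norm{D_1\cap D_2}>\frac12 c_1$ dichotomy then forces only about half as many genuinely new colors per level. Tracking the count gives increments of $3\frac{d}{\log d+3}$ and $2\frac{d}{\log d+3}$ on alternate levels, whose partial sums starting from $2\frac{d}{\log d+3}$ are exactly $\floor{\frac{5m}{2}}\frac{d}{\log d+3}$; equivalently, one may recurse in steps of two, each step contributing $5\frac{d}{\log d+3}$. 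As in the general proof I would verify alongside that the blocks meeting each family of combination gadgets can be $o_{m+1}$-colored.

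The main obstacle is purely geometric. Because every interval is forced to have length exactly $1$, I can neither realize the ``covering'' overlaps of \autoref{thm:kd_lower_general} by widening an interval, nor recurse by nesting a smaller construction inside the gaps of a larger one. The positions of all blocks must instead be chosen, using only their left-endpoint coordinate, so that simultaneously (a) every pair of gadgets that is required to conflict has overlapping intervals and distinct $\epsi$-parameters, and (b) every pair of gadgets that must stay independent is separated by distance strictly greater than $1$, and this has to remain consistent through all levels of the recursion. Reconciling (a) and (b) without the freedom to nest is exactly what costs the factor of two against the general bound, and it is also where the floor in $\floor{\frac{5m}{2}}$ enters, through the parity of the per-level increments. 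The remaining bookkeeping --- the number of blocks needed for the pigeonhole, the disjointness of the various color sets from $C^\ast$, and the final $o_{m+1}$-colorability --- then follows the template of \autoref{thm:kd_lower_general} with only notational changes.
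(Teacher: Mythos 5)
Your plan founders at the combination step, and the geometric obstacle you flag at the end is not residual bookkeeping but a fatal gap. In your inductive step, pigeonhole produces several blocks sharing a palette $C^*$, and you then need unit gadgets that are forced to conflict with those blocks. But a unit gadget $\sbrac{x,x+1}$ only intersects intervals whose left endpoints lie in $\sbrac{x-1,x+1}$, whereas the same-palette blocks given by pigeonhole are arbitrary among the $\bar{c}$ blocks: Algorithm can assign palettes round-robin, which guarantees that any two blocks receiving the same palette are very far apart. Then no unit interval can meet two of them, so your gadgets cannot simultaneously be forced off $C^*$ and placed so that the dichotomy on $\norm{D_1 \cap D_2}$ applies; ``merging adjacent blocks'' does not help, since adjacent blocks need not share a palette. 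The problem compounds at higher levels: a level-$m$ block is itself a construction whose sub-blocks must be pairwise non-intersecting, hence spread over a region of width much greater than $1$, so the colors of $C^*$ occur at positions more than distance $1$ apart and no single unit gadget can intersect a representative of each of them; the key property $C^* \cap D_i = \emptyset$ then cannot be enforced at all. This is precisely why a Kierstead--Trotter style recursion is unavailable for unit intervals, and why your claimed per-level increments of $3\frac{d}{\log d+3}$ and $2\frac{d}{\log d+3}$ are not substantiated by any construction.

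The paper's proof is not a recursion on $m$; the idea you are missing is \emph{adaptive placement}. It consists of three flat phases. An initial phase plays $HS_{k,d}(\epsi_i,0,1)$ for $i=1,\ldots,\floor{\frac{m}{2}}$, forcing a set $C_{init}$ of $\floor{\frac{m}{2}}\frac{2d}{\log{d}+3}$ colors. A separation phase then runs $2\floor{\frac{m}{2}}$ subphases in which the weight vectors still come from $HS_{k,d}$, but the position of each unit interval is chosen by a binary search $Sep(L_i,R_i)$: each new interval is placed at the midpoint of a current window, and the window shrinks to the right half when Algorithm reuses a color of $C_{init}$ and to the left half otherwise. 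Combined with a marking rule for subphases, this yields a point $P$ such that every separation interval with left endpoint to the left of $P$ is colored outside $C_{init}$, and at least $\floor{\frac{m}{2}}\frac{d}{\log{d}+3}$ such colors (the set $C_{sep}$) occur there. A final phase plays $HS_{k,d}(\epsi_i,P-1,P)$ for $\ceil{\frac{m}{2}}$ further parameters; these intervals intersect all initial-phase intervals and all separation intervals left of $P$, hence require $\ceil{\frac{m}{2}}\frac{2d}{\log{d}+3}$ colors outside $C_{init} \cup C_{sep}$. The total is $\brac{2\floor{\frac{m}{2}} + \floor{\frac{m}{2}} + 2\ceil{\frac{m}{2}}}\frac{d}{\log{d}+3} = \floor{\frac{5m}{2}}\frac{d}{\log{d}+3}$, and the floor enters through the phase sizes, not through parity of increments in a recursion. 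Your reuse of the $HS_{k,d}(\epsi,L,R)$ encoding and of the conflict between calls with distinct $\epsi$-parameters is correct, but without the adaptive separation mechanism the stated bound is not reached.
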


\begin{proof}

The proof combines strategy $HS_{k,d}(\epsi,L,R)$ introduced in the proof of Theorem~\ref{thm:kd_lower_general} with technique similar to the one by Epstein and Levy~\cite{EpsteinL05_2,EpsteinL05}.
Assume that the sequence of encoding parameters $\{\epsi_i\}_{i \in \mathbb{N}_{+}}$ is defined the same way as in the proof of Theorem~\ref{thm:kd_lower_general}.

The strategy consists of 3 phases.
In the \emph{initial phase} Presenter uses strategy $HS_{k,d}(\epsi_i,0,1)$ for $i=1,\ldots,\floor{\frac{m}{2}}$ sequentially.
There is a coloring of all intervals introduced in the initial phase using $\floor{\frac{m}{2}}\brac{\frac{d}{k} + \log{d} + 3}$ colors, but Algorithm uses at least $\floor{\frac{m}{2}}\frac{2d}{\log{d}+3}$ colors.
Let $C_{init}$ be a $\floor{\frac{m}{2}}\frac{2d}{\log{d}+3}$-element subset of colors used by Algorithm in the initial phase.

For $L < R < L+1$, let $Sep(L,R)$ be the \emph{separation strategy} that introduces $d$ unit intervals in the following way.
Initialize $l = L$, and $r = R$.
To get next interval, calculate $p = \frac{1}{2}(l+r)$ and introduce interval $I = \sbrac{p, p+1}$.
If Algorithm colors $I$ with color in $C_{init}$, then update $r = p$.
Otherwise, \emph{mark} interval $I$ and update $l = p$.
Observe that to the left of $p$ there are only left endpoints of marked intervals.
Moreover, all introduced intervals have nonempty intersection.

The \emph{separation phase} consists of $2\floor{\frac{m}{2}}$ subphases.
We fix $L_1=\frac{3}{2}$ and $R_1=2$.
For $i=2,\ldots,2\floor{\frac{m}{2}}$, points $L_i$ and $R_i$ are established after the $\brac{i-1}$-th subphase.
Denote by $Sub_{i}$, the strategy for the $i$-th subphase being a combination of the $HS_{k,d}(\epsi_{i},0,1)$ strategy and the $Sep(L_i,R_i)$ strategy.
Strategy $Sub_{i}$ introduces $d$ intervals.
The position of each interval is determined using $Sep(L_i,R_i)$ strategy, and the $d$-dimensional vector of weights associated with each interval is determined according to $HS_{k,d}(\epsi_{i},0,1)$.
See Figure~\ref{fig:strategy_sub} for a diagram of the strategy $Sub_i$.

At the end of each subphase, Presenter decides whether the subphase is \emph{marked} or not.
The set of marked subphases is denoted by $\mathcal{M}$.
Let $C_i$ be the set of colors used by Algorithm in the $i$-th subphase and not present in the set $C_{init}$.
Subphase $i$ is marked if and only if one of the following conditions holds:
\begin{enumerate}
\item the number of remaining subphases including the $i$-th is $\floor{\frac{m}{2}} - \norm{\mathcal{M}}$.
\item $\norm{C_{i}} \geq \frac{d}{\log{d}+3}$ and $\norm{\mathcal{M}} < \floor{\frac{m}{2}}$,
\end{enumerate}
Observe that at the end of the separation phase we have exactly $\floor{\frac{m}{2}}$ marked subphases.

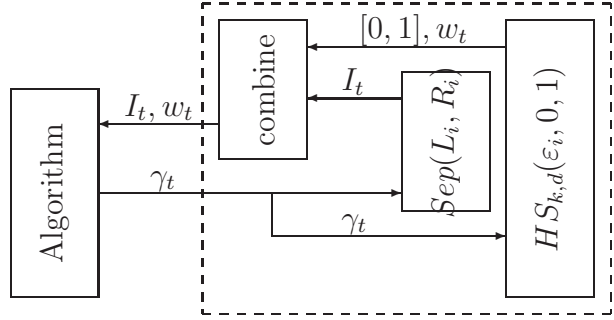
\begin{figure}[H] \setlength{\unitlength}{0.09in}
\centering
\begin{picture}(35,20) 

\put(0,1){\framebox(5,12){\rotatebox{90}{Algorithm}}}
\put(12,9){\framebox(5,8){\rotatebox{90}{combine}}}
\put(11,0){\dashbox{0.5}(23.5,18)}

\put(22.5,6){\framebox(5,8){\rotatebox{90}{$Sep(L_i,R_i)$}}}

\put(28.5,1){\framebox(5,16){\rotatebox{90}{$HS_{k,d}(\epsi_i,0,1)$}}}

\put(20,16){$[0,1],w_t$}
\put(28.5,15.5){\vector(-1,0){11.5}}

\put(19,13){$I_t$}
\put(22.5,12.5){\vector(-1,0){5.5}}

\put(6.5,11.5){$I_t, w_t$}
\put(12,11){\vector(-1,0){7}}

\put(8,7.5){$\gamma_t$}
\put(5,7){\vector(1,0){17.5}}

\put(19,5){$\gamma_t$}
\put(15,7){\line(0,-1){2.5}}
\put(15,4.5){\vector(1,0){13.5}}


\end{picture} 
\caption{Strategy $Sub_{i}$ for a single subphase} 
\label{fig:strategy_sub}
\end{figure}

Let $L^{*}$ be the left endpoint of the leftmost interval introduced in the $i$-th subphase.
Let $L$ be the left endpoint of the rightmost interval introduced in the $i$-th subphase and colored by Algorithm with a color $c \notin C_{init}$.
Set $L = L_i$ if such an interval does not exist.
Let $R$ be the left endpoint of the leftmost interval introduced in the $i$-th subphase and colored by Algorithm with a color $c \in C_{init}$.
Set $R = R_i$ if such an interval does not exist.
If subphase $i$ is marked then $L_{i+1} = L$ and $R_{i+1} = R$.
Otherwise, $L_{i+1} = L_i$ and $R_{i+1} = L^{*}$.
This completes the definition of the separation phase.

%
%
%
%
%
%
%
%
%
%

Let $m' = 2\floor{\frac{m}{2}}$ and $P = \frac{1}{2}(L_{m'+1} + R_{m'+1})$.
Observe that every interval introduced in the separation phase with the left endpoint to the left of $P$ belongs to a marked subphase and is colored with a color $c \notin C_{init}$.
Let $C_{sep}$ be the set of colors used in the separation phase to color intervals with the left endpoint to the left of $P$.

In each subphase Algorithm uses at least $\frac{2d}{\log{d}+3}$ different colors, so in the separation phase Algorithm uses at least $2\floor{\frac{m}{2}}\frac{2d}{\log{d}+3}$ colors in total.
Because $\norm{C_{init}} = \floor{\frac{m}{2}}\frac{2d}{\log{d}+3}$, Algorithm, in the separation phase, uses at least $\floor{\frac{m}{2}}\frac{2d}{\log{d}+3}$ colors not in $C_{init}$.
The set of marked subphases $\mathcal{M}$ contains $x$ subphases in which Algorithm used at least $\frac{d}{\log{d}+3}$ such colors and $\floor{\frac{m}{2}} - x$ last subphases.
From the first $\floor{\frac{m}{2}} + x$ subphases only $x$ subphases are marked.
By the definition, in an unmarked subphase $i$ for $i \leq \floor{\frac{m}{2}} + x$, Algorithm uses less than $\frac{d}{\log{d}+3}$ colors not in $C_{init}$.
Thus, at most $\floor{\frac{m}{2}}\frac{d}{\log{d}+3}$ such colors from subphases $1$ up to $\floor{\frac{m}{2}} + x$ are not in the set $C_{sep}$.
All colors not in $C_{init}$ used in the subphase $i$ for $i > \floor{\frac{m}{2}} + x$ are in the set $C_{sep}$.
Thus, $\norm{C_{sep}} \geq \floor{\frac{m}{2}}\frac{d}{\log{d}+3}$.

In the \emph{final phase}, Presenter uses strategy $HS_{k,d}(\epsi_i,P-1, P)$ for $i=m+1,\ldots,m+\ceil{\frac{m}{2}}$ sequentially.
Every interval introduced in the final phase intersects with every interval from the initial phase and every interval from the separation phase with the left endpoint to the left of $P$.
Thus, each color used in the final phase belongs neither to $C_{init}$ nor to $C_{sep}$.
In the final phase, Algorithm uses at least $\ceil{\frac{m}{2}}\frac{2d}{\log{d}+3}$ colors.

In total, Algorithm uses at least $\brac{2\floor{\frac{m}{2}} + \floor{\frac{m}{2}} + 2\ceil{\frac{m}{2}}}\frac{d}{\log{d}+3} = \floor{\frac{5m}{2}}\frac{d}{\log{d}+3}$
different colors.
On the other hand, the presented set of intervals can be easily colored using $\brac{\floor{\frac{m}{2}}+\ceil{\frac{m}{2}}}\brac{\floor{\frac{d}{k}} + \log{d} + 3} = m\brac{\floor{\frac{d}{k}}+\log{d}+3}$ colors.

\end{proof}

Note that for $k = +\infty$, the strategy $HS_{\infty,d}$ becomes independent of $k$-cardinality constraint.
This gives two new bounds on the competitive ratio for on-line $d$-dimensional interval coloring problems.

\begin{theorem}\label{thm:d_weighted_lower_general}
For every $d \geq 2$ and $n \in \mathbb{N}_{+}$,
there is a strategy for Presenter that forces Algorithm
to use at least $(5m-3)\frac{d}{\log{d}+3}$ different colors
in the on-line $d$-dimensional interval coloring game while the constructed set of intervals is $m(\log{d}+3)$-colorable.
\end{theorem}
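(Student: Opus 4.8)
The plan is to obtain this statement as the $k = +\infty$ specialization of Theorem~\ref{thm:kd_lower_general}, exactly as the preceding remark indicates. First I would observe that the on-line $d$-dimensional interval coloring game is literally the on-line $(k,d)$ game with the cardinality parameter set to $+\infty$: the constraint ``at most $k$ intervals of a common color contain any point $p$'' is vacuous when $k = +\infty$, so the colorings admissible for Algorithm are precisely those satisfying the $d$-dimensional bandwidth condition alone. Hence a Presenter strategy forcing many colors in the $(\infty,d)$ game is, word for word, a strategy for the $d$-dimensional game; no loss occurs, because Algorithm gains no freedom by dropping a constraint it was never bound by.

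Next I would re-run the construction from the proof of Theorem~\ref{thm:kd_lower_general} using the building block $HS_{\infty,d}$ in place of $HS_{k,d}$. By the remark following Theorem~\ref{thm:general_graphs}, $HS_{\infty,d}$ has no depleted rows and coincides with the Halld\'{o}rsson--Szegedy strategy of Theorem~\ref{thm:general_graphs}: it still forces Algorithm to use at least $2\frac{d}{\log d + 3}$ colors on the underlying graph while Presenter uses only $\log d + 3$ colors. The encoding layer $HS_{\infty,d}(\epsi,L,R)$ and its correctness proof are unchanged, since the key bandwidth bookkeeping --- each call presents at most $d$ intervals, so a color class $\mcJ$ satisfies $W_l \leq \alpha + \delta\brac{\norm{\mcJ}-1} \leq 1 - \delta$ when $I_l \in \mcJ$, $W_l \leq \epsi\norm{\mcJ} < 1 - \epsi$ when $I_l \notin \mcJ$, and $W_l \leq \delta\norm{\mcJ} \leq 1 - \alpha$ for $l = t$ --- never refers to $k$. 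Consequently the Kierstead--Trotter-style recursion on $m$ (splitting $\sbrac{A,B}$ into $\bar c = 3\binom{c_{m+1}}{c_m}+1$ subregions, extracting a color set $C^*$ repeated four times, and assembling the overlap sets $D_1,\ldots,D_5$) goes through verbatim and yields the same count $\brac{5m-3}\frac{d}{\log d + 3}$ of forced colors.

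The only quantity that changes is the colorability bound. In Theorem~\ref{thm:kd_lower_general} each $HS_{k,d}$ call is $\brac{\frac{d}{k} + \log d + 3}$-colorable, the term $\frac{d}{k}$ accounting exactly for the depleted rows; with $HS_{\infty,d}$ there are no depleted rows, so each call is $\brac{\log d + 3}$-colorable and the at most $m$ overlapping layers give an $m\brac{\log d + 3}$-colorable family, as claimed. I do not expect a genuine obstacle: the work lies entirely in confirming that the cardinality constraint entered the proof of Theorem~\ref{thm:kd_lower_general} nowhere except through the depleted-row count, and therefore through the single $\frac{d}{k}$ summand of the colorability bound. Making this ``$k$ appears only here'' audit explicit is the one point that deserves care, after which the specialization is immediate.
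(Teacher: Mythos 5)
Your proposal is correct and follows exactly the route the paper intends: the paper derives Theorem~\ref{thm:d_weighted_lower_general} from Theorem~\ref{thm:kd_lower_general} precisely by setting $k = +\infty$, noting that $HS_{\infty,d}$ then has no depleted rows and the cardinality constraint becomes vacuous, so the $\frac{d}{k}$ term drops from the colorability bound. Your explicit audit that $k$ enters the argument only through the depleted-row count is the right (and only) point needing verification, and it holds.
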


\begin{theorem}\label{thm:d_weighted_lower_unit}
For every $d \geq 2$ and $n \in \mathbb{N}_{+}$,
there is a strategy for Presenter that forces Algorithm
to use at least $\floor{\frac{5m}{2}}\frac{d}{\log{d}+3}$ different colors
in the on-line $d$-dimensional unit interval coloring game while the constructed set of intervals is $m(\log{d}+3)$-colorable.
\end{theorem}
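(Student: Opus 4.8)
The plan is to obtain this statement as the specialization $k = +\infty$ of Theorem~\ref{thm:kd_lower_unit}. The construction proving that theorem treats the subroutine $HS_{k,d}$ as a black box: the initial, separation, and final phases invoke only calls of the form $HS_{k,d}(\epsi, L, R)$ to assign vectors of weights to unit intervals, and the bookkeeping that maintains $C_{init}$, $C_{sep}$, and the set $\mathcal{M}$ of marked subphases never refers to $k$ directly. So I would first observe that the very same three-phase construction can be executed with $k = +\infty$, at which point $HS_{\infty,d}$ reduces to the original Halld\'orsson--Szegedy strategy of Theorem~\ref{thm:general_graphs}, since a progress matrix with $k = +\infty$ has no depleted rows.

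The only two quantities in the conclusion that could depend on $k$ are the number of colors forced and the colorability bound, and I would check each in turn. The forced count $\floor{\frac{5m}{2}}\frac{d}{\log{d}+3}$ is assembled across the three phases from the guarantee that in each call Algorithm uses at least $\frac{2d}{\log{d}+3}$ colors; by Theorem~\ref{thm:general_graphs_k_cardinality} this per-call guarantee is $2\frac{d}{\log{d}+3}$ independently of $k$, so the entire lower bound persists verbatim for $k = +\infty$. For colorability, the proof of Theorem~\ref{thm:kd_lower_unit} exhibits a coloring with $m\brac{\floor{\frac{d}{k}} + \log{d} + 3}$ colors, in which the summand $\floor{\frac{d}{k}}$ counts precisely the depleted rows of the progress matrices. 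Setting $k = +\infty$ eliminates every depleted row, this term vanishes, and the colorability improves to $m(\log{d}+3)$, exactly as claimed.

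The one substantive thing to confirm is that the vector-bandwidth argument carried out in the proof of Theorem~\ref{thm:kd_lower_general} survives the removal of the cardinality constraint. That argument --- that two intervals may share a color exactly when the corresponding vertices $v_i, v_j$ are non-adjacent, and that intervals produced by two distinct $HS_{k,d}$ calls with overlapping regions can never share a color --- is purely a statement about the weight vectors $w_t$ and the parameters $\epsi_i, \alpha_i, \delta_i$, none of which involve $k$. Since the on-line $d$-dimensional coloring constraint is exactly the $(+\infty,d)$ constraint (the sum of weights at any point under any single color must not exceed $1$ in each coordinate, with no cap on multiplicity), dropping the cardinality requirement leaves this argument untouched. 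I therefore expect no genuine obstacle: the result is a direct specialization, and the bulk of the work is simply tracking that each inequality used in Theorem~\ref{thm:kd_lower_unit} is $k$-free.
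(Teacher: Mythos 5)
Your proposal is correct and is exactly the paper's own route: the paper derives Theorem~\ref{thm:d_weighted_lower_unit} by setting $k=+\infty$ in Theorem~\ref{thm:kd_lower_unit}, observing that $HS_{\infty,d}$ becomes independent of the cardinality constraint (no depleted rows) and that the $\frac{d}{k}$ term in the colorability bound vanishes. Your more detailed check that every inequality in the three-phase construction is $k$-free simply makes explicit what the paper states in one remark.
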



\end{document}